\journal{Discrete Mathematics}
 \newcommand*{\double}[2][.1ex]{%
  \mathrel{\vcenter{\offinterlineskip%
  \hbox{$#2$}\vskip#1\hbox{$#2$}}}}
\newcommand{\opp}{{\mathrm{opp}}}
\newcommand{\arcC}{\mathrm{C}_A}
\newcommand{\arcW}{\mathrm{W}}
\newcommand{\Ical}{{\mathcal{I}}}
\newcommand{\Wcal}{{\mathcal{W}}}
\newcommand{\Fbb}{{\mathbb{F}}}
\newcommand{\Nbb}{{\mathbb{N}}}
\newcommand{\Qbb}{{\mathbb{Q}}}
\newcommand{\Pbf}{{\mathbf{P}}}
\newcommand{\restrict}{|}
\newcommand{\contract}{.}
\newcommand{\BS}{\backslash}
\newcommand{\rk}{\mathrm{rk}}
\newcommand{\N}{\mathbb{N}}
\newcommand{\disunion}{\mathbin{\dot{\cup}}}
\renewcommand{\phi}{\varphi}
\newcommand{\R}{\mathbb{R}}
\newcommand{\maparrow}{\longrightarrow}
\newcommand{\BSET}[1]{{\BS\left\{ #1 \right\}}}
\newcommand{\SET}[1]{{\left\{ #1 \right\}}}
\newcommand{\COMMENT}[1]{}
\newcommand{\XXXCUTXXX}[1]{}
\newcommand{\ROMANENUM}{\renewcommand{\theenumi}{(\roman{enumi})}\renewcommand{\labelenumi}{\theenumi}}
\newcommand{\routesto}{\double{\rightarrow}}
\newcommand{\deftext}[2][]{\emph{#2}}
\newcommand{\deftextX}[2][]{\emph{#2}}
\newcommand{\PRFR}[1]{\ignorespaces}
\newcommand{\bm}{\ignorespaces}
\newtheorem{theorem}{Theorem}[section]
\newtheorem{conjecture}[theorem]{Conjecture}
\newtheorem{lemma}[theorem]{Lemma}
\theoremstyle{definition}
\newtheorem{definition}[theorem]{Definition}
\newtheorem{remark}[theorem]{Remark}
\begin{document}

\begin{frontmatter}


\title{Duality Respecting Representations and Compatible Complexity Measures for Gammoids}

\author{Immanuel Albrecht}
\ead{mail@immanuel-albrecht.de}
\address{FernUniversit\"at in Hagen,\\
Department of Mathematics and Computer Science,\\
Discrete Mathematics and Optimization\\
D-58084~Hagen, Germany}


\begin{abstract}
We show that every gammoid has special digraph representations, such that a representation of the dual of the gammoid
may be easily obtained by reversing all arcs. In an informal sense, the duality notion of a poset applied to
the digraph of a special representation of a gammoid commutes with the operation of forming the dual of that gammoid.
We use these special representations in order to define a
complexity measure for gammoids, such that the classes of gammoids with bounded complexity are closed under duality,
minors, and direct sums.
\end{abstract}

\begin{keyword}
gammoids\sep digraphs\sep duality\sep complexity measure
\MSC[2010] 05B35\sep 05C20\sep 06D50
\end{keyword}

\end{frontmatter}

A well-known result due to J.H.~Mason is that the class of gammoids is closed under duality, minors, and direct sums \cite{M72}.
Furthermore, it has been shown by D.~Mayhew that every gammoid is also a minor of an excluded minor for the class of gammoids \cite{Ma16},
which indicates that handling the class of all gammoids may get very involved.
In this work, we introduce a notion of complexity for gammoids which may be used to define subclasses of gammoids 
with bounded complexity, that
still have the desirable property of being closed under duality, minors, and direct sums; yet their representations have a more limited
number of arcs than the general class of gammoids.


\section{Preliminaries}

In this work, we consider \emph{matroids} to be pairs $M=(E,\Ical)$ where $E$ is a finite set 
and $\Ical$ is a system of
independent subsets of $E$ subject to the usual axioms (\cite{Ox11}, Sec.~1.1).
If $M=(E,\Ical)$ is a matroid and $X\subseteq E$, then the restriction of $M$ to $X$
shall be denoted by $M\restrict X$ (\cite{Ox11}, Sec.~1.3), and the contraction of $M$ to $X$ shall be denoted by $M\contract X$
(\cite{Ox11}, Sec.~3.1).
Furthermore, the notion of a \emph{digraph} shall be synonymous with what is described more 
precisely as \emph{finite simple directed graph} that may have some loops, i.e. a digraph is 
a pair $D=(V,A)$ where $V$ is a finite
set and $A\subseteq V\times V$. 
Every digraph $D=(V,A)$ has a unique \emph{opposite digraph} $D^\opp = (V,A^\opp)$ where
$(u,v)\in A^\opp$ if and only if $(v,u)\in A$.
All standard notions related to digraphs in this work are in
accordance with the definitions found in \cite{BJG09}. A \emph{path} in $D=(V,A)$ is
a non-empty and non-repeating sequence $p = \left(p_1 p_2 \ldots p_n\right)$ of vertices $p_i\in V$ such that
for each $1 \leq i < n$, $(p_i,p_{i+1})\in A$. By convention, we shall denote $p_n$ by $p_{-1}$.
Furthermore, the set of vertices traversed by a path $p$ shall be denoted by $\left| p \right| = \SET{p_1,p_2,\ldots,p_n}$
and the set of all paths in $D$ shall be denoted by $\Pbf(D)$.

\begin{definition}\PRFR{Jan 22nd}
	Let $D = (V,A)$ be a digraph, and $X,Y\subseteq V$. A \deftext{routing} from $X$ to $Y$ in $D$ is a family of paths $R\subseteq \Pbf(D)$ such that
	\begin{enumerate}\ROMANENUM
		\item for each $x\in X$ there is some $p\in R$ with $p_{1}=x$,
		\item for all $p\in R$ the end vertex $p_{-1}\in Y$, and
		\item for all $p,q\in R$, either $p=q$ or $\left|p\right|\cap \left|q\right| = \emptyset$.
	\end{enumerate}
	We shall write $R\colon X\routesto Y$ in $D$ as a shorthand for ``$R$ is a routing from $X$ to $Y$
    in $D$'', and if no confusion is possible, \label{n:routing}
    we just write $X\routesto Y$ instead of $R$ and $R\colon X\routesto Y$.
    A routing $R$ is called \deftext{linking} from $X$ to $Y$, if it is a routing onto $Y$, i.e. whenever $Y = \SET{p_{-1}\mid p\in R}$.
\end{definition}

\begin{definition}\label{def:gammoid}\PRFR{Jan 22nd}
    Let $D = (V,A)$ be a digraph, $E\subseteq V$,
    and $T\subseteq V$. 
    The \deftext[gammoid represented by DTE@gammoid represented by $(D,T,E)$]{gammoid represented by $\bm{(D,T,E)}$} is defined to be the matroid $\Gamma(D,T,E)=(E,\Ical)$\label{n:GTDE}
     where
    \[ \Ical = \SET{X\subseteq E \mid \text{there is a routing } X\routesto T \text{ in D}}. \]
    The elements of $T$ are usually called \deftextX{sinks} in this context, although they are not required to be actual sinks of the digraph $D$. To avoid confusion, 
    we shall call the elements of $T$ \deftext{targets} in this work. A matroid $M'=(E',\Ical')$ is called \deftextX{gammoid}, if there is a digraph $D'=(V',A')$ and a set $T'\subseteq V'$ such that $M' = \Gamma(D',T',E')$.
\end{definition}

\begin{theorem}[\cite{M72}, Corollary~4.1.2]\label{thm:baseRepresentation}
    Let $M=(E,\Ical)$ be a gammoid and $B\subseteq E$ a base of $M$. Then there is 
    a digraph $D=(V,A)$ such that $M = \Gamma(D,B,E)$.
\end{theorem}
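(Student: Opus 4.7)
The plan is to start from an arbitrary representation $M = \Gamma(D_0, T_0, E)$ guaranteed by the definition of gammoid, and then to modify $D_0$ by an arc-reversal procedure that moves the target set from $T_0$ onto $B$ while preserving the matroid. Since $B$ and $T_0$ are both bases of $M$, they have the same cardinality, and $B$ is independent, so there exists a routing $R_0 \colon B \routesto T_0$ in $D_0$; because $|B|=|T_0|$, this routing is automatically linking. Let $A_0 \subseteq A$ denote the set of arcs traversed by the paths of $R_0$, and define $D = (V, A')$ where $A' = (A \setminus A_0) \cup \SET{(v,u) \mid (u,v) \in A_0}$. Geometrically, we flip precisely the arcs used along the paths of $R_0$, so the paths of $R_0$ become reverse paths from $T_0$ to $B$ in $D$. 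I will then claim $M = \Gamma(D, B, E)$.

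To verify the claim, I would prove both inclusions between the independent sets of $\Gamma(D_0, T_0, E)$ and $\Gamma(D, B, E)$ by a symmetric-difference/path-swapping argument. For the forward direction, suppose $X \subseteq E$ is independent in $\Gamma(D_0, T_0, E)$ via some routing $R_X \colon X \routesto T_0$ in $D_0$. I would form the arc multiset $R_0 \symdiff R_X$ in $D_0$ and, standard for Menger-type arguments, decompose it into vertex-disjoint paths and cycles; the path endpoints will lie on the symmetric difference of starting points $B \symdiff X$ and of end points (which is empty because $R_X$ ends in $T_0 = $ ends of $R_0$). Reading each of these decomposed paths in the reversed arc set $A_0$ and the kept arc set $A \setminus A_0$ appropriately yields a routing from $X$ to $B$ in $D$. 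The converse inclusion is entirely symmetric: a routing $X \routesto B$ in $D$ combined with the reverse of $R_0$ (which, in $D$, is a routing $T_0 \routesto B$) produces, by the same swapping argument, a routing $X \routesto T_0$ in $D_0$.

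The main obstacle is making the path-swapping step clean. The subtlety is that the path decomposition must respect vertex-disjointness, not merely arc-disjointness, so I would either invoke a standard Menger/linking lemma (treating the problem as a max-flow instance on the vertex-split auxiliary graph, as is customary for gammoids) or prove a self-contained lemma that given two vertex-disjoint routings $R_0, R_X$ in a digraph, one can combine them into a vertex-disjoint routing whose start set and end set are the symmetric differences of the original start/end sets. Once that combinatorial lemma is in hand, the rest of the argument is a bookkeeping check that each constructed routing actually lives in the appropriate digraph ($D$ or $D_0$), which follows from the observation that arcs \emph{off} $A_0$ are shared between $D$ and $D_0$, while arcs \emph{on} $A_0$ appear reversed in $D$, precisely matching the direction in which the swapped paths want to traverse them.
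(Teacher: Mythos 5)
Your construction is not the one the paper (following Mason) uses, and unfortunately it is not correct: merely reversing the arcs traversed by $R_0$ does not in general preserve the represented matroid. Here is a concrete counterexample. Take $V=\SET{b_1,b_2,c_1,c_2,r,t_1,t_2}$, $E=\SET{b_1,b_2,c_1,c_2}$, $T_0=\SET{t_1,t_2}$, and arcs $(b_1,r),(r,t_1),(b_2,t_2),(c_1,r),(r,t_2),(c_2,t_1)$. In $M=\Gamma(D_0,T_0,E)$ the set $B=\SET{b_1,b_2}$ is a base via $R_0=\SET{(b_1\,r\,t_1),(b_2\,t_2)}$, and $\SET{c_1,c_2}$ is independent via the disjoint paths $(c_1\,r\,t_2)$ and $(c_2\,t_1)$. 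Reversing exactly the arcs of $R_0$ yields $A'=\SET{(r,b_1),(t_1,r),(t_2,b_2),(c_1,r),(r,t_2),(c_2,t_1)}$; in this digraph every path from $c_1$ to $B$ and every path from $c_2$ to $B$ passes through $r$ (the only way $c_2$ can continue is $c_2\to t_1\to r$, and $c_1$'s only out-arc goes to $r$), so $\SET{c_1,c_2}$ is dependent in $\Gamma((V,A'),B,E)$. Hence $\Gamma((V,A'),B,E)\neq M$. This is exactly the vertex-disjointness obstacle you flag at the end: in the combined arc set of $R_0$ and $R_X$ the vertex $r$ is used by both routings with different continuations, and the ``symmetric difference decomposes into vertex-disjoint paths'' lemma you hope for is false in this form.

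The repair is the one the flow picture actually dictates. If you carry out the augmenting-path argument in the vertex-split auxiliary digraph, the residual arc $v_{\mathrm{out}}\to v_{\mathrm{in}}$ of a vertex $v$ used by $R_0$ lets a new path arrive at $v$ ``from behind'' along $R_0$ and then leave along any \emph{other} out-arc of $v$ without consuming $v$'s capacity a second time. Translated back to an unsplit digraph, this is not arc reversal but Mason's swap: for an arc $(r,s)$ of the routing one deletes \emph{all} out-arcs of $r$, re-sources them at $s$, and adds $(s,r)$. In the example above this creates the arc $(t_1,t_2)$, which restores the disjoint routing $c_1\to r\to b_1$, $c_2\to t_1\to t_2\to b_2$. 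A secondary issue: your opening claim that $T_0$ is a base of $M$ with $\left|T_0\right|=\left|B\right|$ is unjustified --- $T_0$ need not be contained in $E$ and may be larger than the rank --- so $R_0$ is only a linking onto some $Y_0\subseteq T_0$, and one still has to argue that the unused targets $T_0\setminus Y_0$ can be discarded (the paper does this kind of maximality argument explicitly in the proof of Lemma~\ref{lem:arcCkVDualityAndMinors}).
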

For a proof, see J.H.~Mason's seminal paper \emph{On a Class of Matroids Arising From Paths in Graphs} \cite{M72}.
Here, we are content with pointing out that the proof is constructive and involves a sequence of
the following kind of construction:

\begin{definition}\label{def:rsswap}\PRFR{Jan 22nd}
  Let $D=(V,A)$ be a digraph, and let $(r,s)\in A$ be an arc of $D$. 
  The \emph{swap of $(r,s)$ in $D$} 
	shall be the digraph 
  $D_{(r,s)} = (V,A_{(r,s)})$ where
  \[ A_{(r,s)} = \SET{(u,v)\in A ~\middle|~ u\not= r} \cup \SET{(s,v)~\middle|~v\not= s \text{~and~} (r,v)\in A} \cup \SET{(s,r)}.\]
\end{definition}

First, observe that the number of arcs in the swap of $(r,s)$ in $D$ is bounded by the number of arcs in the original digraph $D$.

The proof of Theorem~\ref{thm:baseRepresentation} in \cite{M72} is based on the observation that the gammoid
$\Gamma(D,T,E)$ is the same as the gammoid
$\Gamma(D_{(r,s)},T_{(r,s)},E)$
where $D=(V,A)$, $T_{(r,s)} = \left(T\backslash\SET{s}\right)\cup\SET{r}$, and $(r,s)\in A$ such that $s\in T$ and $r\notin T$.

\begin{remark}\label{rem:swapsequence}
For every representation $(D,T,E)$ and every routing $R \colon B \routesto T$, there is a straight-forward sequence of swaps  
that yields a representation of the desired form: assume that $R = \SET{r^{(1)},\ldots,r^{(n)}}$ thus fixing a particular order of the paths in $R$.
Each path in the routing corresponds to a contiguous subsequence of swaps which appear in the above order.
The path $r^{(i)} = \left(r_1^{(i)}r_2^{(i)}\ldots r_{n_i}^{(i)}\right)$ corresponds to a block of $n_i - 1$ swaps, indexed by $k = 1,2,\ldots, n_i-1$.
The $k$th swap in the subsequence corresponding to $r^{(i)}$ is the swap of $\left(r_{n_i -k}^{(i)},r_{n_i-k+1}^{(i)}\right)$ in the digraph
$D^{(k+K_i-1)}$, which yields the digraph 
\[ D^{(k+K_i)} = \left(D^{(k+K_i-1)}\right)_{\left(r_{n_i -k}^{(i)},r_{n_i-k+1}^{(i)}\right)}
\]
where $K_i = \sum_{j=1}^{j<i} (n_j-1)$ and $D^{(0)} = D$.

In other words, given a basis $B$ of $\Gamma(D,T,E)$, we may swap all arcs of a given routing $B\routesto T$ in $D$ in the reverse order of traversal in their respective paths,
which gives us a digraph $D'$ such that $\Gamma(D,T,E) = \Gamma(D',B,E)$. Furthermore, all $b\in B\backslash T$ will be sinks in $D'$, and for the sake of reduction of complexity, 
we remove all arcs that start in $b\in B\cap T$ from $D'$, too; this does not change the represented gammoid.
\end{remark}

%
%
%
%
\section{Special Representations}

\begin{definition}\label{def:dualityRespectingRepr}\PRFR{Jan 22nd}
	Let $(D,T,E)$ be a representation of a gammoid. We say that $(D,T,E)$ is a \deftext{duality respecting representation},
	if 
	\[ \Gamma(D^\opp,E\BS T,E) = \left( \Gamma(D,T,E) \right)^\ast  \]
    where $\left( \Gamma(D,T,E) \right)^\ast$ denotes the dual matroid of $\Gamma(D,T,E)$.
\end{definition}

\begin{lemma}\label{lem:dualityrespectingrepresentation}\PRFR{Jan 22nd}
  Let $(D,T,E)$ be a representation of a gammoid with $T\subseteq E$,
  such that every $e\in E\BS T$ is a source of $D$, and every $t\in T$ is a sink of $D$.
    Then $(D,T,E)$ is a duality respecting representation.
\end{lemma}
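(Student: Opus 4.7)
The plan is to verify $\Gamma(D^\opp,E\BS T,E)=(\Gamma(D,T,E))^*$ by matching bases via complementation. Write $M:=\Gamma(D,T,E)$ and $M':=\Gamma(D^\opp,E\BS T,E)$, and recall that $M^*$ is the matroid on $E$ whose bases are exactly the complements of the bases of $M$. Thus it suffices to prove that $B\subseteq E$ is a basis of $M$ if and only if $E\BS B$ is a basis of $M'$.

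Before that, I would observe that the hypothesis is self-dual under arc reversal together with swapping $T$ and $E\BS T$: in $D^\opp$, the set $T$ consists of sources and $E\BS T$ consists of sinks. Hence it suffices to prove one direction of the basis equivalence. I also note that $T$ is a basis of $M$: the family $\{(t):t\in T\}$ is a routing because every $t\in T$ is a sink, and no element $e\in E\BS T$ can be added because any path ending in $T$ would share its sink endpoint with the corresponding singleton. Consequently $\rk(M)=|T|$, and by the symmetric argument $\rk(M')=|E\BS T|=\rk(M^*)$.

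Now assume $B$ is a basis of $M$ with linking routing $R\colon B\routesto T$ in $D$. The hypothesis forces a rigid shape on each $r=(r_1,\ldots,r_n)\in R$: an intermediate vertex $r_i$ has both incoming and outgoing arcs and therefore belongs neither to $T$ (sinks) nor to $E\BS T$ (sources), hence lies in $V\BS E$. Furthermore, any $t\in B\cap T$ is simultaneously the start of some path (because $t\in B$) and a sink, so that path is forced to be the singleton $(t)$. A short vertex-disjointness argument then shows that the non-singleton paths of $R$ define a bijection $B\BS T\leftrightarrow T\BS B$ via their starts and endpoints, with all interior vertices living in $V\BS E$.

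With this in hand, I define $R'$ in $D^\opp$ as the union of the singletons $(e)$ for $e\in(E\BS T)\BS B$ together with the arc-reversals $\bar r$ of the non-singleton paths $r\in R$. Its set of starting vertices is $((E\BS T)\BS B)\cup(T\BS B)=E\BS B$; its set of endpoints is $((E\BS T)\BS B)\cup(B\BS T)=E\BS T$; and the disjointness of the reversed paths is inherited from $R$. No added singleton $(e)$ collides with a reversed path, because $e\in(E\BS T)\BS B$ is not in $B\BS T$ (so not a start), not in $V\BS E$ (so not interior), and not in $T\BS B$ (so not an endpoint). Hence $R'$ is a linking routing $E\BS B\routesto E\BS T$ in $D^\opp$, exhibiting $E\BS B$ as independent in $M'$ of cardinality $\rk(M')$, and therefore a basis. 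The main obstacle is precisely the structural claim that interior vertices of paths in $R$ lie in $V\BS E$: both halves of the hypothesis are essential here, since without it a dual singleton could collide with the interior of a reversed path and destroy vertex-disjointness.
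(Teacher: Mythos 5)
Your proposal is correct and follows essentially the same route as the paper: it reverses the non-singleton paths of a linking $B\routesto T$, adds the singleton paths on $E\BS(T\cup B)$, and uses the source/sink hypothesis to confine interior vertices to $V\BS E$ so that the resulting family is a disjoint linking of $E\BS B$ onto $E\BS T$ in $D^\opp$. Your explicit remark that the hypothesis is self-dual under arc reversal (so one direction of the basis correspondence suffices) is a slightly cleaner packaging of what the paper dismisses as ``an analog argument,'' but the substance is identical.
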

\begin{proof}\PRFR{Jan 22nd}
  We have to show that the bases of $N = \Gamma(D^\opp, E\BS T, E)$ are precisely the complements of the
  bases of $M = \Gamma(D,T,E)$ (\cite{Ox11}, Thm.~2.1.1).
  Let $B\subseteq E$ be a base of $M$, then there is a linking $L\colon B\routesto T$ in $D$, and since $T$ consists of sinks,
  we have that the single vertex paths $\SET{(x)\in\Pbf(D)~\middle|~x\in T\cap B} \subseteq L$.
  Further, let $L^\opp = \SET{ \left(p_n p_{n-1} \ldots p_1 \right) \mid  \left(p_1 p_2 \ldots p_n\right) \in L}$.
  Then $L^\opp$ is a linking
  from $T$ to $B$ in $D^\opp$ which routes $T\BS B$ to $B\BS T$. The special property of $D$, that $E\BS T$ consists of sources and that $T$ consists of sinks,
  implies, that for all $p\in L$, we have $\left| p \right|\cap E = \SET{p_1,p_{-1}}$.
  Observe that thus
 $$R = \SET{p\in L^\opp \mid p_{1}\in T \BS B} \cup \SET{(x)\in \Pbf(D^\opp)\mid x\in E\BS \left( T\cup B \right)}$$
  is a linking from $E\BS B=\left( T\disunion \left( E\BS T \right) \right)\BS B$ onto $E\BS T$ in $D^\opp$, thus $E\BS B$ is a base of $N$. An analog argument yields that for every base $B'$ of $N$, the set
  $E\BS B'$ is a base of $M$. Therefore $\Gamma(D^\opp, E\BS T, E) = \left( \Gamma(D,T,E) \right)^\ast$.
\end{proof}

\begin{definition}\label{def:standardRepresentation}\PRFR{Mar 27th}
	Let $M$ be a gammoid and $(D,T,E)$ with $D=(V,A)$ be a representation of $M$.
	Then $(D,T,E)$ is a \deftext[standard representation of a gammoid]{standard representation of $\bm M$},
  if 
  \begin{enumerate}\ROMANENUM
   \item $T\subseteq E$, 
   \item every $t\in T$ is a sink in $D$, and
   \item every $e\in E\BS T$ is a source in $D$.
  \end{enumerate}
\end{definition}

The name \emph{standard representation} is justified, since the real matrix representation $A\in \R^{T\times E}$ of $M$
 obtained from $D$ through the Lindström Lemma \cite{Li73,Ardila2007}
 is a \emph{standard matrix representation} of $\Gamma(D,T,E)$ up to possibly rearranging the columns (\cite{We71}, p.137).
 
 \begin{theorem}\label{thm:two-four}\label{lem:sourcesinkrepresentation}\PRFR{Jan 22nd}
	Let $M=(E,\Ical)$ be a gammoid, and $B\subseteq E$ a base of $M$.
	There is a digraph $D=(V,A)$ such that $(D,B,E)$ is a standard representation of $M$.
\end{theorem}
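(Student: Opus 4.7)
The plan is to start from a representation supplied by Theorem~\ref{thm:baseRepresentation} together with the swap construction described in Remark~\ref{rem:swapsequence}, which yields a representation $(D_0,B,E)$ of $M$ in which every $b \in B$ is a sink of $D_0$ (the swap makes $B\BS T$ sinks, and the subsequent removal of outgoing arcs from $B\cap T$ handles the remainder). Thus conditions~(i) and~(ii) of Definition~\ref{def:standardRepresentation} are already met, and only condition~(iii) remains: each $e \in E\BS B$ must be turned into a source without destroying the sink property of $B$ or changing the represented matroid.

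To achieve this, I would perform a \emph{vertex split} at every $e \in E\BS B$ that is not already a source in $D_0$: introduce a fresh vertex $\hat e$, redirect each arc $(u,e)$ to $(u,\hat e)$ and each arc $(e,v)$ to $(\hat e,v)$, and finally insert the single new arc $(e,\hat e)$. In the resulting digraph $D$ the vertex $e$ has no incoming arcs (hence is a source) and its unique outgoing arc points to $\hat e$, while $\hat e$ inherits the entire former adjacency of $e$. Sink-ness of $B$ is preserved automatically: $b\in B$ being a sink in $D_0$ means there was no arc $(b,e)$, and so no arc $(b,\hat e)$ is ever created. Performing the splits at all non-source elements of $E\BS B$ in parallel does not cause interference, since each split only modifies arcs incident to the split vertex.

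The key step is to verify $\Gamma(D,B,E)=\Gamma(D_0,B,E)$, which I would do by exhibiting a bijection between routings $X\routesto B$ in $D_0$ and routings $X\routesto B$ in $D$, for every $X\subseteq E$. Given a routing in $D_0$, insert $\hat e$ immediately after $e$ on the unique path starting at $e$ (if $e\in X$), and replace any interior occurrence of $e$ by $\hat e$ otherwise. Conversely, because $e$ has no incoming arcs in $D$ and its only outgoing arc points to $\hat e$, any path of a routing in $D$ that visits $e$ must start there and step immediately to $\hat e$; collapsing the pair back to $e$ recovers a valid path in $D_0$. The main obstacle is the case analysis in this bijection: one has to treat paths starting at $e$, paths passing through $e$ as an internal vertex, and paths avoiding $e$ altogether in a coherent way, so that pairwise vertex-disjointness is preserved in both directions. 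Iterating the argument over every non-source element of $E\BS B$ then produces the desired standard representation $(D,B,E)$ of $M$.
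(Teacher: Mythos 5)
Your argument is correct, and both halves of it can be made rigorous along the lines you sketch. It differs from the paper's proof mainly in packaging. The paper performs a single global construction: starting from $\Gamma(D_0,B,E)=M$ it takes a disjoint primed copy $V_0'$ of \emph{all} of $V_0$, keeps every original arc on the primed copies, and wires the ground set to the copies by $(b',b)$ for $b\in B$ and $(e,e')$ for $e\in E\BS B$; this makes every $b\in B$ a sink and every $e\in E\BS B$ a source in one stroke, with the routing correspondence being immediate because each element of $E$ is incident with exactly one ``bridge'' arc. You instead work in two stages: you invoke the swap machinery of Remark~\ref{rem:swapsequence} (including the removal of out-arcs from $B$) to secure the sink condition, and then apply a local vertex split only at those $e\in E\BS B$ that are not already sources. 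The source-making step is essentially the same trick as the paper's $(e,e')$ arcs, just applied surgically rather than wholesale. What your route buys is a slightly leaner output --- at most $\lvert E\BS B\rvert$ new vertices and arcs instead of $\lvert V_0\rvert$ new vertices and $\lvert E\rvert$ new arcs, which would marginally sharpen the $\lvert A_1\rvert=\lvert A\rvert+\lvert E\rvert$ bookkeeping in the final section --- at the cost of a longer verification: you must justify the swap stage separately and then carry out the path-rewriting bijection with its case analysis (start vertex versus interior occurrence of $e$, and the simultaneous handling of arcs joining two split vertices). The paper's uniform copy avoids all of that case analysis because in its digraph $D$ no element of $E$ is ever an interior vertex of a path. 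Both proofs are sound; if you write yours up, take care to define the final arc set of the split digraph explicitly rather than describing the splits as performed ``in parallel,'' since an arc between two split vertices is modified at both endpoints.
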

\begin{proof}
  Let $D_0=(V_0,A_0)$ be a digraph such that $\Gamma(D_0,B,E) = M$ (Theorem~\ref{thm:baseRepresentation}).
  Furthermore, let $V$ be a set with $E\subseteq V$ such that there is an injective map $ '\colon V_0 \maparrow V\BS E,\,v\mapsto v'$.
  Without loss of generality we may assume that $V = E\disunion V_0'$.
  We define the digraph $D=(V,A)$ such that $$A = \SET{(u',v') ~\middle|~ (u,v)\in A_0} \cup \SET{(b',b) ~\middle|~ b\in B} \cup \SET{(e,e')~\middle|~ e\in E\BS B}.$$ 
  For every $X\subseteq E$, we obtain that by construction,  there is a routing $X\routesto B$ in $D_0$ if and only if there is a routing
  $X\routesto B$ in $D$. Therefore $(D,B,E)$ is a representation of $M$ with the additional property that every $e\in E\BS B$ is a source
  in $D$, and every $b\in B$ is a sink in $D$.
  Thus $(D,B,E)$ is a duality respecting representation of $M$ (Lemma~\ref{lem:dualityrespectingrepresentation}).
\end{proof}

\section{Gammoids with Low Arc-Complexity}

\begin{definition}\PRFR{Mar 27th}
  Let $M$ be a gammoid. The \deftext[arc-complexity of a gammoid]{arc-complexity of $\bm M$}
  is defined to be \label{n:ArcCompl}
  \[ \arcC(M) = \min\SET{\vphantom{A^A} {\left| A \right|} ~\middle|~ \left( (V,A), T, E \right)\text{~is a standard representation of~}M}.\]
\end{definition}

\begin{lemma}\label{lem:arcCkVDualityAndMinors}\PRFR{Mar 27th}
  Let $M=(E,\Ical)$ be a gammoid, $X\subseteq E$. Then the inequalities
   $\arcC(M\restrict X) \leq \arcC(M)$,  $\arcC(M\contract X) \leq \arcC(M)$, and $\arcC(M) = \arcC(M^\ast)$ hold.
\end{lemma}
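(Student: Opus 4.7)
The plan is to handle the three inequalities in the order: duality, deletion, then contraction via duality and deletion. For $\arcC(M) = \arcC(M^\ast)$ I would start from an optimal standard representation $(D,T,E)$ of $M$ with $|A| = \arcC(M)$. By Lemma~\ref{lem:dualityrespectingrepresentation}, $(D^\opp, E\BS T, E)$ represents $M^\ast$, and the standard-representation conditions translate cleanly under arc reversal: vertices in $E\BS T$ are sources in $D$ and hence sinks in $D^\opp$, while vertices in $T$ are sinks in $D$ and hence sources in $D^\opp$. As $D^\opp$ has the same arc count as $D$, this yields $\arcC(M^\ast)\leq\arcC(M)$; exchanging the roles of $M$ and $M^\ast$ gives equality.

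For $\arcC(M\restrict X)\leq\arcC(M)$, I would induct on $|E\BS X|$ and reduce to the single-element case $X = E\BS\{e\}$. Fix an optimal standard representation $(D,T,E)$ of $M$. If $e\notin T$, then $e$ is a source of $D$ and the triple $(D,T,E\BS\{e\})$ is itself a standard representation of $M\restrict(E\BS\{e\})$ with the same arc count. If $e\in T$ and $e$ is a coloop of $M$, the triple $(D, T\BS\{e\}, E\BS\{e\})$ does the job: for any $X\subseteq E\BS\{e\}$ independent in $M\restrict(E\BS\{e\})$ one has $X\cup\{e\}$ independent in $M$ (since $e\notin\cl_M(E\BS\{e\})$), so any routing $X\cup\{e\}\routesto T$ uses the trivial single-vertex path at the sink $e$ and routes $X$ into $T\BS\{e\}$.

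The main obstacle is the remaining subcase, where $e\in T$ is not a coloop of $M$. Here I would select a base $B^\ast$ of $M$ with $e\notin B^\ast$ together with a linking $R\colon B^\ast\routesto T$ in $D$. The path $p = (p_1\ldots p_n)\in R$ with $p_n = e$ starts at some $f\in B^\ast\BS T\subseteq E\BS T$; crucially, because $T$ consists of sinks of $D$ and $E\BS T$ of sources, the interior vertices $p_2,\ldots,p_{n-1}$ cannot be sinks (they still have an outgoing arc along $p$) nor sources (they have an incoming arc along $p$), so they must lie in $V\BS E$. Applying the swap sequence from Remark~\ref{rem:swapsequence} along $p$ alone yields a digraph $D'$ with $\Gamma(D', (T\BS\{e\})\cup\{f\}, E) = M$ and $|A'|\leq|A|$, since a short count of arcs shows that each individual swap is non-increasing. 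Arc incidences are modified only at $f$, at $e$, and at the interior vertices of $p$ (which are outside $E$), so every $v\in (E\BS T)\BS\{f\}$ remains a source in $D'$, $T\BS\{e\}$ remains a set of sinks, and the final swap strips all outgoing arcs from $f$, making $f$ a sink. Thus $(D', (T\BS\{e\})\cup\{f\}, E\BS\{e\})$ is a standard representation of $M\restrict(E\BS\{e\})$ with at most $\arcC(M)$ arcs.

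Finally, $\arcC(M\contract X)\leq\arcC(M)$ follows from the identity $M\contract X = (M^\ast\restrict(E\BS X))^\ast$: chaining the duality equality with the deletion inequality yields $\arcC(M\contract X) = \arcC(M^\ast\restrict(E\BS X))\leq \arcC(M^\ast) = \arcC(M)$.
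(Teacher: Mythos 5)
Your proof is correct. The duality and contraction parts coincide with the paper's argument (reverse all arcs of an optimal standard representation; then reduce contraction to restriction of the dual). For the restriction inequality the paper proceeds in one shot: it sets $Y=T\BS X$, picks a routing $R_0\colon B_0\routesto Y$ with $B_0\subseteq X$ of \emph{maximum} cardinality, performs the swap sequence of Remark~\ref{rem:swapsequence} along all of $R_0$ at once, and then uses the maximality of $B_0$ in a contradiction argument to show that the targets of $Y$ not reached by $R_0$ may be discarded without changing the restriction to $X$. You instead delete one element at a time and split according to whether the deleted target $e$ is a coloop; the coloop case replaces the paper's maximality argument (a coloop $e\in T$ is exactly a target unreachable from $E\BS\SET{e}$, so it can simply be dropped from the target set), while in the non-coloop case you swap along a single path, after which the new target set already lies inside the reduced ground set, so the representation of the restriction is immediate. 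Both routes rest on the same key tool (Mason's swap) and the same source/sink bookkeeping; yours trades the global maximality contradiction for an induction plus the coloop dichotomy, which makes each individual step more transparent at the cost of iterating the construction. One small point of convention: the paper follows Oxley in writing $M\contract X$ for the contraction of $M$ \emph{onto} $X$, so the identity it uses is $M\contract X=\left(M^\ast\restrict X\right)^\ast$ rather than your $\left(M^\ast\restrict\left(E\BS X\right)\right)^\ast$; since the restriction inequality holds for every subset of $E$, your chain of inequalities is unaffected.
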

\begin{proof}\PRFR{Mar 27th}
Let $M$ be a gammoid and let $(D,T,E)$ be a standard representation of $M$ with $D=(V,A)$ for which $\left| A \right|$ is
minimal among all standard representations of $M$. Then $(D^\opp,E\BS T, E)$ is a standard representation of $M^\ast$ 
that uses the same number of arcs. Thus $\arcC(M) = \arcC(M^\ast)$ holds for all gammoids $M$.

Let $X\subseteq E$.
  If $T\subseteq X$, then $(D,T,X)$ is a standard representation of $M\restrict X$ and therefore
  $\arcC(M\restrict X) \leq \arcC(M)$.
  
  If $T\not \subseteq X$ is the case, then let $Y = T\BS X$, 
  and let $B_0\subseteq X$ be a set of maximal cardinality such that
  there is a routing $R_0\colon B_0\routesto Y$ in $D$. 
  Let $D'=(V,A')$ be the digraph that arises from $D$ 
  by the sequence of swaps defined in Remark~\ref{rem:swapsequence} 
  with respect to the routing $R_0$.
  Recall that every $b\in B_0$ is a sink in $D'$ and that $\left| A' \right| = \left| A \right|$.
  We argue that $(D',(T\cap X) \cup B_0, X)$ is a standard representation of $M\restrict X$:
  
  Let $Y_0 = \SET{p_{-1}~\middle|~ p\in R_0}$ be the set of targets that are entered by the routing $R_0$.
  It follows from Corollary~4.1.2 \cite{M72} that the triple
  $(D',(T\cap X) \cup B_0 \cup \left( Y\BS Y_0 \right), E)$ is a representation of $M$. 
  The sequence of operations 
  we carried out on $D$ preserves all those sources and sinks of $D$, which are not visited by a path $p\in R_0$.
  So we obtain that every $e\in E\BS\left( T\cup B_0 \right)$ is a source in $D'$, and that every $t\in T\cap X$
  is a sink in $D'$. Thus the set $T' = (T \cap X)\cup B_0$ consists of sinks in $D'$, and the set
  $X\BS T' \subseteq E\BS\left( T\cup B_0 \right)$ consists of sources in $D'$. Therefore $(D',(T\cap X) \cup B_0, X)$ is a standard
  representation.
  
  We show that $(D',(T\cap X) \cup B_0, X)$ indeed represents $M\restrict X$.
  Clearly, we have that $(D',(T\cap X) \cup B_0 \cup \left( Y\BS Y_0 \right), X)$ is a representation of $M\restrict X$.
  Now, let us assume that $(D',(T\cap X) \cup B_0, X)$ does not represent $M\restrict X$. 
  In this case, there must be a set $X_0\subseteq X$ which is independent in $M\restrict X$, yet dependent in
  $\Gamma(D',(T\cap X) \cup B_0, X)$, because the representation $(D',(T\cap X) \cup B_0, X)$ is basically equal to
  the representation
  $(D',(T\cap X) \cup B_0 \cup \left( Y\BS Y_0 \right), X)$ minus some targets.
  This implies that there is a routing $Q_0\colon X_0\routesto (T\cap X) \cup B_0 \cup \left( Y\BS Y_0 \right)$ in $D'$, 
  but
  there is no routing $X_0\routesto (T\cap X) \cup B_0$ in $D'$. 
  Thus there is a path $q\in Q_0$ with $q_{-1} \in Y\BS Y_0$
  and $q_{1}\in X$. Consequently, we have a routing $Q'_1 = \SET{q}\cup\SET{(b)\in \Pbf(D') ~\middle|~ b\in B_0}$ in $D'$.
  But this implies that there is a routing $B_0\cup\SET{q_{1}} \routesto Y$ in $D$,
  a contradiction to the maximal cardinality of the choice of $B_0$
  above. Thus every independent set of $M\restrict X$ is also independent with respect to $\Gamma(D',(T\cap X) \cup B_0, X)$,
  and $(D',(T\cap X) \cup B_0, X)$ is indeed a standard representation of $M\restrict X$. 
  Consequently
  $\arcC(M\restrict X) \leq \arcC(M)$ holds in the case $T\not \subseteq X$, too.
  
  Finally, we have $\arcC(M\contract X) = \arcC\left( \left( M^\ast \restrict X \right)^\ast \right) =
  \arcC\left( M^\ast \restrict X \right) \leq \arcC(M^\ast) = \arcC(M)$.
\end{proof}

\begin{definition}\PRFR{Mar 27th}
  Let $f\colon \N\maparrow \N\BSET{0}$ be a 
  function. We say that $f$ is \deftext{super-additive},
  if for all $n,m\in \N\BSET{0}$
  \[ f(n+m) \geq f(n) + f(m) \]
  holds.
\end{definition}

\begin{definition}\label{def:arcWf}\PRFR{Mar 27th}
  Let $f\colon \N\maparrow \N\BSET{0}$ be a 
  super-additive function, and let
   $M=(E,\Ical)$ be a gammoid. The \deftext[width of a gammoid]{$\bm f$-width of $\bm M$}
  shall be \label{n:arcWfM}
  \[ \arcW_f(M) = \max\SET{\frac{\arcC\left( \left( M\contract Y \right)\restrict X \right))}{f\left( \left| X \right|
   \right) }
     ~\middle|~ X\subseteq Y\subseteq E}. \qedhere \]
\end{definition}

\begin{theorem}\PRFR{Mar 27th}
  Let $f\colon \N\maparrow \N\BSET{0}$ be a 
  super-additive function, and let $q\in \Qbb$ with $q>0$.
  Let $\Wcal_{f,q}$ denote the class of gammoids $M$ with $\arcW_f(M) \leq q$.
  The class $\Wcal_{f,q}$ is closed under duality, minors, and direct sums.
\end{theorem}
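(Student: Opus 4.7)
The plan is to verify the three closure properties in turn. For duality, I would use the matroid identity
\[ \bigl((M\contract Y)\restrict X\bigr)^\ast = (M^\ast \contract Y')\restrict X, \qquad Y'=X\cup(E\BS Y), \]
which follows from the standard behavior of duality with deletion and contraction (noting that, in the paper's notation, $M\contract Y$ has ground set $Y$ when $Y\subseteq E$). The map $Y\mapsto Y'$ is an involution on $\SETL{Y}{X\subseteq Y\subseteq E}$, so for each fixed $X$ it gives a bijection between the minors $(M\contract Y)\restrict X$ and $(M^\ast\contract Y')\restrict X$. Combining this bijection with the equality $\arcC(N)=\arcC(N^\ast)$ from Lemma~\ref{lem:arcCkVDualityAndMinors} yields $\arcW_f(M^\ast) = \arcW_f(M)$, hence $M^\ast\in\Wcal_{f,q}$ whenever $M\in\Wcal_{f,q}$.

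For closure under minors, I would invoke transitivity of minor formation. If $N = (M\contract Y_0)\restrict X_0$ is a minor of $M$ and $X\subseteq Y\subseteq X_0$, then contracting and deleting further inside $N$ amounts to contracting and deleting further inside $M$; concretely, $(N\contract Y)\restrict X = (M\contract Y_1)\restrict X$ with $Y_1 = Y\cup(Y_0\BS X_0)$, which satisfies $X\subseteq Y_1\subseteq E$. Therefore each ratio occurring in the max defining $\arcW_f(N)$ already occurs in the max defining $\arcW_f(M)$, giving $\arcW_f(N)\leq \arcW_f(M)\leq q$.

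For direct sums, let $M = M_1\oplus M_2$ with ground set $E_1\disunion E_2 = E$, and assume $M_1,M_2\in\Wcal_{f,q}$. For any $X\subseteq Y\subseteq E$, setting $X_i = X\cap E_i$ and $Y_i = Y\cap E_i$, minors distribute over direct sums, so $(M\contract Y)\restrict X = N_1\oplus N_2$ where $N_i = (M_i\contract Y_i)\restrict X_i$. The key estimate is $\arcC(N_1\oplus N_2) \leq \arcC(N_1)+\arcC(N_2)$, which I would establish by forming the vertex-disjoint union of minimal standard representations of $N_1$ and $N_2$: no path in the combined digraph crosses between components, so the routings decompose componentwise and the result is again a standard representation of the direct sum. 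When both $X_i$ are nonempty, super-additivity of $f$ then gives
\[ \arcC(N_1\oplus N_2) \leq q\bigl(f(|X_1|)+f(|X_2|)\bigr) \leq q\,f(|X_1|+|X_2|) = q\,f(|X|); \]
when some $X_i=\emptyset$, the corresponding $N_i$ is the empty matroid with $\arcC(N_i)=0$, so the inequality reduces to the single-summand bound. Hence $\arcW_f(M_1\oplus M_2)\leq q$. The main obstacle I expect is precisely this direct-sum step: since super-additivity is only assumed on $\N\BSET{0}$, the degenerate cases with some $X_i=\emptyset$ must be treated separately rather than folded into one uniform inequality.
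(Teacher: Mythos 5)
Your proposal is correct and follows essentially the same route as the paper's proof: duality via $\arcC(N)=\arcC(N^\ast)$ together with the correspondence between minors of $M^\ast$ and duals of minors of $M$ (your explicit involution $Y\mapsto X\cup(E\BS Y)$ just unpacks the proposition the paper cites from Oxley), minors via transitivity of minor formation, and direct sums via the componentwise decomposition of minors, the disjoint-union bound $\arcC(N_1\oplus N_2)\leq\arcC(N_1)+\arcC(N_2)$, and super-additivity of $f$ with the degenerate cases handled separately. No gaps.
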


\begin{proof}
  Let $M=(E,\Ical)$ be a gammoid and $X\subseteq Y\subseteq E$.
  It is obvious from Definition~\ref{def:arcWf} that $\arcW_f \left( \left( M\contract Y \right) \restrict X \right) \leq \arcW_f(M)$,
  and consequently $\Wcal_{f,q}$ is closed under minors.
  Since $\arcC(M) = \arcC(M^\ast)$ and since every minor of $M^\ast$ is the dual of a minor of $M$ (\cite{Ox11}, Prop.~3.1.26),
  we obtain that $\arcW_f(M) = \arcW_f(M^\ast)$. Thus $\Wcal_{f,q}$ is closed under duality.

     Now, let $M=(E,\Ical)$ and $N=(E',\Ical')$ with $E\cap E' =\emptyset$ and $M,N\in \Wcal_{f,q}$.
     The cases where either $E = \emptyset$ or $E' = \emptyset$ are trivial, now let $E\not= \emptyset \not= E'$.
   Furthermore, let $X\subseteq Y\subseteq E\cup E'$.
    The direct sum commutes with the forming of minors in the sense that
   \begin{align*}
     \left( (M\oplus N)\contract Y \right)\restrict X &  \,\,\,=\,\,\,
      \left(\vphantom{A^A}\left( M\contract Y\cap E \right)\restrict X\cap E\right)  \oplus 
      \left( ( N\contract Y\cap E' )\restrict X\cap E'\right).
   \end{align*}
   Let $(D,T,E)$ and $(D',T',E')$ be representations of $M$ and $N$ where $D=(V,A)$ and $D'=(V',A')$ such that $V\cap V' = \emptyset$.
   Then $\left(\vphantom{A^A} (V\cup V', A\cup A'), T\cup T', E\cup E' \right)$ is a representation of $M\oplus N$,
   and consequently $\arcC(M\oplus N) \leq \arcC(M) + \arcC(N)$ holds for all gammoids $M$ and $N$,
   thus we have
   \begin{align*}
    \arcC\left(\vphantom{A^A} \left( (M\oplus N)\contract Y \right)\restrict X  \right) & \,\,\,\leq \,\,\,
    \arcC\left(\vphantom{A^A} M_{X,Y}\right) 
    + \arcC\left(\vphantom{A^A} N_{X,Y} \right) 
   \end{align*}
   where $M_{X,Y} = \left( M\contract Y\cap E \right)\restrict X\cap E$ and $N_{X,Y} = ( N\contract Y\cap E' )\restrict X\cap E'$.
   The cases where $\arcC(M_{X,Y}) = 0$  or $\arcC(N_{X,Y}) = 0$ are trivial, so we may assume that $X\cap E \not= \emptyset \not= X\cap E'$.
   We use the super-additivity of $f$ at $(\ast)$ in order to derive
   \begin{align*}
    \frac {\arcC\left(\vphantom{A^A} \left( (M\oplus N)\contract Y \right)\restrict X  \right)}{f\left( \left| X \right|
    \right)} &
    \,\,\, \leq\,\,\,
    \frac { \arcC\left(\vphantom{A^A} M_{X,Y}\right) 
    + \arcC\left(\vphantom{A^A} N_{X,Y}\right) }{f\left( \left| X \right|
    \right) } \\
    &
    \,\,\, \leq \,\,\, \frac{q\cdot f\left( \left| X\cap E \right|
    \right) + q\cdot f\left( \left| X\cap E' \right|
     \right)}{f\left( \left| X \right|
     \right) }  \,\,\, \stackrel{(\ast)}{\leq} \,\,\, q .
   \end{align*}
   As a consequence we obtain $\arcW_f(M\oplus N) \leq q$, and therefore
   $\Wcal_{f,q}$ is closed under direct sums.
\end{proof}

\section{Further Remarks and Open Problems}

A consequence of a result of S.~Kratsch and M.~Wahlström (\cite{KW12}, Thm.~3) is, that
if a matroid $M=(E,\Ical)$ is a gammoid, then there is a representation $(D,T,E)$ of $M$ with $D=(V,A)$
and  $\left| V \right|  \leq \rk_M(E)^2 \cdot \left| E \right| + \rk_M(E) + \left| E \right| $.
It is easy to see that if $M \in \Wcal_{f,q}$, then there is a representation $(D,T,E)$ of $M$ with $D=(V,A)$
and $\left| V \right| \leq  \left\lfloor 2 q\cdot f\left( \left| E \right| \right) \right\rfloor$,
since every arc is only incident with at most two vertices.
Therefore, deciding $\Wcal_{f,q}$-membership with an exhaustive digraph search appears to be
easier than deciding gammoid-membership with an exhaustive digraph search.

Furthermore, the above result yields the following: Take a representation $(D,T,E)$ with $D=(V,A)$ and 
$\left| V \right|  \leq \rk_M(E)^2 \cdot \left| E \right| + \rk_M(E) + \left| E \right| $. Clearly, 
any arc $(v,v)\in A$ may be dropped 
without affecting the represented gammoid, so we assume $(v,v) \notin A$ for all $v\in V$.
 Using the construction
from the proof of Theorem~\ref{thm:two-four} yields a representation $(D_1,T,E)$ with $D_1=(V_1,A_1)$ and
$ \left| V_1 \right| = \left| V \right| + \left| E \right| $.  Furthermore, $\left| A_1 \right| = \left|A\right| + \left| E \right|\leq \left| V \right| \cdot \left|V \right| - \left| V \right| + \left| E \right|
\leq \left| V \right|^2$. Thus, there is an immediate upper bound 
\begin{flalign*}\arcC(M) \leq\,\,\,\,\,\,\,\, & 
\rk_M(E)^4 \cdot \left| E \right|^2 + 2\cdot\rk_M(E)^3 \cdot \left| E \right|
+ 2\cdot\rk_M(E)^2 \cdot \left| E \right|^2 
\\& + \rk_M(E)^2 + 2\cdot\rk_M(E)\cdot \left| E \right|
+ \left| E \right|^2  .
\end{flalign*}

Let $r,n\in \N$ with $n\geq r$, the uniform matroid of rank $r$ on $n$ elements is the matroid
\( U_{r,n} = \left( \SET{1,2,\ldots,n}, \Ical_{r,n} \right) \) where $ \Ical_{r,n} = \SET{\vphantom{A^A}X\subseteq \SET{1,2,\ldots,n}~\middle|~ \left| X \right|\leq r}$.
Let $T=\SET{1,2,\ldots,r}$, $X = \SET{r+1,r+2,\ldots,n}$, and $D=(X\cup T, X\times T)$. Then
$U_{r,n} = \Gamma(D,T,T\cup X)$. Thus $\arcC\left( U_{r,n} \right) \leq r \cdot \left( n-r \right)$.
Unfortunately, we were not able to find a known result in graph or digraph theory that implies:
\begin{conjecture}
  \[\arcC\left( U_{r,n} \right) = r\cdot(n-r).\]
\end{conjecture}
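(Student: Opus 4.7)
The upper bound $\arcC(U_{r,n}) \leq r(n-r)$ has already been established above, so the content of the conjecture is the matching lower bound. The plan is to prove $\arcC(U_{r,n}) \geq r(n-r)$ by induction on $n$ for fixed $r$, with base case $n = r$: here $U_{r,r}$ is the free matroid on $r$ elements and $\arcC(U_{r,r}) = 0 = r(r-r)$.

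For the inductive step, let $n > r$ and let $(D,T,E)$ with $D = (V,A)$ be a minimum standard representation of $U_{r,n}$. Pick any source $e \in E \setminus T$. Deleting $e$ together with its $d^+(e)$ out-arcs from $D$ yields a standard representation of $U_{r,n} \restrict (E \setminus \{e\}) = U_{r,n-1}$ using $\left|A\right| - d^+(e)$ arcs, and the inductive hypothesis gives
\[ \left|A\right| - d^+(e) \geq r(n-r-1), \qquad \text{hence} \qquad \left|A\right| \geq r(n-r-1) + d^+(e). \]
Consequently, in order to obtain $\left|A\right| \geq r(n-r)$ it suffices to show that some source $e^{\ast} \in E \setminus T$ satisfies $d^+(e^{\ast}) \geq r$ in a minimum standard representation of $U_{r,n}$.

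This last claim is the crux of the argument, and it is where the main obstacle lies. A natural attempt is to invoke the rank-$r$ vertex-disjoint path property: for every $r$-subset $X \subseteq E \setminus T$ containing $e^{\ast}$, the set $X$ is a base of $U_{r,n}$ and therefore admits a linking $X \routesto T$; by varying $X$ and the target that $e^{\ast}$ is sent to across all $t \in T$, one would like to conclude that there must be $r$ distinct first arcs out of $e^{\ast}$. However, two such linkings may share the first arc at $e^{\ast}$, so the straightforward counting breaks down and a finer structural analysis is needed. An alternative and possibly cleaner route would be to show that every minimum standard representation of $U_{r,n}$ is in fact bipartite (has no internal vertices at all), whereupon the condition that every pair $(e,t) \in (E \setminus T) \times T$ be joined by a single arc forces the complete bipartite digraph with exactly $r(n-r)$ arcs. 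Internal vertices of in- or out-degree $1$ can be absorbed into their unique neighbour, strictly decreasing $\left|A\right|$ while preserving the represented gammoid; however, eliminating internal vertices with both degrees at least $2$ in an arc-nonincreasing way—which would seemingly require exploiting the uniform vertex-disjoint path systems supplied by $U_{r,n}$ much more globally than the preceding local reductions—is the principal technical difficulty, and is why the conjecture remains open.
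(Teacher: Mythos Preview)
Your proposal is not a proof, and you yourself acknowledge this in the final paragraph. The inductive reduction you set up is correct as far as it goes: deleting a source $e\in E\setminus T$ from a standard representation of $U_{r,n}$ does yield a standard representation of $U_{r,n-1}$, and the inequality $\left|A\right|\geq r(n-r-1)+d^+(e)$ follows. But the step ``some source has out-degree at least $r$'' is exactly the content of the conjecture in disguise, and neither of the two routes you sketch (varying linkings to force many first arcs, or proving that minimum standard representations are bipartite) is carried through. You correctly identify that absorbing degree-$1$ internal vertices is easy while handling internal vertices with both in- and out-degree $\geq 2$ is the obstruction; this is precisely the missing idea.

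For comparison with the paper: the paper does not prove this statement either. It is explicitly labelled a \emph{Conjecture}, preceded by the remark that the authors were unable to find a known result implying the lower bound. So there is no ``paper's own proof'' to compare against; your write-up is an honest outline of a plausible attack together with a clear statement of why it stalls, which is an accurate reflection of the state of the problem.
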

Let $\Fbb$ be a finite field. Is there an upper bound $c_\Fbb \in \Nbb$ such that we have $\arcC(M) \leq c_\Fbb\cdot \left| E\right|$ for every 
 gammoid $M=(E,\Ical)$ that is representable over $\Fbb$?

\begin{conjecture}\label{conj:complexGammoids}
  For every $q\in \Qbb$ there is a gammoid $M=(E,\Ical)$ with $$\arcC(M) \geq q\cdot \left| E \right|.$$
\end{conjecture}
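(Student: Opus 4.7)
The natural candidate for the required family is the sequence of uniform matroids $M_n = U_{\lfloor n/2\rfloor, n}$ for large $n$. The ``complete bipartite'' standard representation---with sinks $T$ a basis of size $r=\lfloor n/2\rfloor$, sources $S=E\setminus T$ of size $n-r$, and all $r(n-r)$ arcs directed from $S$ to $T$---immediately yields $\arcC(M_n)\leq r(n-r)$. If the matching lower bound $\arcC(U_{r,n})=r(n-r)$ asserted by the preceding conjecture holds, then $\arcC(M_n)/|E(M_n)| \geq (n-1)/4$, which exceeds any prescribed $q\in\Qbb$ as soon as $n\geq 4q+1$. Thus the task reduces---at worst---to establishing a super-linear lower bound on $\arcC$ for a suitable uniform matroid family.

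Fix a standard representation $(D,T,E)$ of $M_n$ with $D=(V,A)$. Definition~\ref{def:gammoid} supplies two useful families of routing constraints: (i) for each $(s,t)\in S\times T$, the basis $(T\setminus\{t\})\cup\{s\}$ yields an $s$-to-$t$ directed path in $D$, because every other element of that basis is a sink and routes trivially to itself; and (ii) for every $r$-subset $X\subseteq S$ there are $r$ pairwise vertex-disjoint paths from $X$ to $T$. By Menger's theorem, (ii) forces every vertex cut separating $X$ from $T$ to have size at least $r$, so no small set of internal vertices can serve as a bottleneck for the $S$-to-$T$ flow. I would then convert (i) and (ii) into an arc-count lower bound via a degree-sum analysis on the sources and sinks combined with a Menger-style bottleneck argument: reachability in (i) together with minimality of $|A|$ should concentrate arcs near sources and sinks, while (ii) should prohibit these arcs from clustering on a few intermediate vertices.

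The hard part will be quantifying the density-versus-connectivity tradeoff in the presence of unbounded internal vertex counts. Since gammoid digraphs freely admit auxiliary internal vertices, per-vertex degrees can be diluted, and naive counting breaks down. A clean treatment likely requires a preliminary reduction to a canonical representation---for instance through repeated applications of the swap operation from Remark~\ref{rem:swapsequence} or an analogue thereof---after which an extremal or expander-type bound can be applied. Should uniform matroids prove too rigid for this approach, a plausible fallback is to consider gammoids built from expander bipartite digraphs or sparse random transversal matroids, for which arc-count lower bounds may be imported from extremal digraph theory.
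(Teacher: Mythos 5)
This statement is presented in the paper as an open conjecture: the author explicitly states that no known result could be found that yields even the special case $\arcC(U_{r,n}) = r\cdot(n-r)$, and no proof of Conjecture~\ref{conj:complexGammoids} is offered. Your proposal does not close that gap. Its logical core is a reduction to the preceding (equally open) conjecture on uniform matroids: you correctly derive the upper bound $\arcC(U_{r,n}) \leq r(n-r)$ from the complete bipartite representation, but the entire content of the statement is a \emph{lower} bound on $\arcC$, and for that you offer only a plan (``degree-sum analysis,'' ``Menger-style bottleneck argument,'' ``expander-type bound'') whose decisive quantitative step is never carried out. Observing via Menger that every vertex cut separating an $r$-subset of sources from $T$ has size at least $r$ does not bound $|A|$ from below by anything super-linear in $n$: a digraph can have large source-to-sink vertex connectivity while remaining sparse, precisely because arbitrarily many auxiliary internal vertices are permitted in a standard representation. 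You acknowledge this dilution problem yourself, which is an admission that the argument is incomplete rather than a resolution of it.

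There is also a substantive reason to be cautious about the route through uniform matroids. The routing requirements for $U_{r,n}$ are concentrator-like: for each $W\subseteq T$, the sources must admit vertex-disjoint linkings of every small subset into $T\setminus W$ avoiding $W$. Superconcentrators achieve closely related connectivity properties with only $O(n)$ edges, so it is not at all clear that $\arcC(U_{r,n})$ must grow like $r(n-r)$; the first conjecture could conceivably fail, in which case your reduction proves nothing. Your fallback suggestion of using expander-based digraphs points in the wrong direction for the same reason: expanders are the standard tool for building \emph{sparse} graphs with strong linking properties, i.e., for upper bounds on arc counts, not lower bounds. To make progress on this conjecture you would need a genuinely new lower-bound technique for $\arcC$ --- for instance, a counting or entropy argument over the family of required linkings, or an invariant that is monotone under the swap operation of Definition~\ref{def:rsswap} and provably large for some explicit family of gammoids.
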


Is the set 
 $\SET{\frac{\arcC(M)}{\left| E \right|} ~\middle|~ M=\left(E,\Ical \right) \text{gammoid}}$ 
 dense in the positive reals?\footnote{This question has been suggested by a reviewer.}

Let us fix some super-additive \( \hat{f}\colon \N \maparrow \N\BSET{0},\,x\mapsto \max\SET{1,x} \).
 Clearly, if Conjecture~\ref{conj:complexGammoids} holds, then $(\Wcal_{\hat{f},i})_{i\in \N\BSET{0}}$ contains an infinite, strictly monotonous subsequence
 of subclasses of the class of gammoids, such that every subclass in that subsequence is closed under duality, minors, and direct sums.

 For which super-additive $f$ and $q\in \Qbb\BSET{0}$ may $\Wcal_{f,q}$ be characterized by finitely many excluded minors?
 
 For which such classes can we list a sufficient (possibly infinite) set of excluded minors that decide class membership of $\Wcal_{f,q}$?


\bigskip
\footnotesize
\noindent\textit{Acknowledgments.}
This research was partly supported by a scholarship granted by the FernUniversität in Hagen.


\section*{References}

\bibliography{drr}

\end{document}